\documentclass[12pt]{amsart}
\usepackage{amsfonts,amsmath,amsthm,oldgerm,amssymb,amscd}

\usepackage[all,cmtip]{xy}

\usepackage{fancyhdr}
\usepackage{psfrag}
\usepackage{graphicx}

\setlength{\topmargin}{-0.3in}
\setlength{\oddsidemargin}{0.20in}
\setlength{\evensidemargin}{0.20in}
\setlength{\textwidth}{6in}
\setlength{\textheight}{9in}
\setlength{\parskip}{0.15in}

\usepackage[indent]{caption}
\usepackage{indentfirst}
\usepackage{float}
\usepackage{latexsym}
\usepackage{graphics}
\usepackage{verbatim}

\newcommand{\Q}{{\mathbb Q}}

\newcommand{\C}{{\mathbb C}}

\newtheorem{theorem}{Theorem}
\newtheorem{lemma}{Lemma}
\newtheorem{corollary}{Corollary}
\newtheorem{proposition}[theorem]{Proposition}
\numberwithin{theorem}{section}
\numberwithin{corollary}{section}
\numberwithin{lemma}{section}

\theoremstyle{definition}

\numberwithin{conj}{section}

\numberwithin{example}{section}
\newtheorem{definition}{Definition}
\numberwithin{definition}{section}

\numberwithin{question}{section}
\numberwithin{equation}{section}

\theoremstyle{remark}
\newtheorem{remark}{Remark}
\numberwithin{remark}{section}

\begin{document}

\title[Locally potentially equivalent Galois representations]{Locally 
potentially equivalent Galois representations}
\date{}
\author{Vijay~M.~Patankar}
\author{C.~S.~Rajan}

\address{Tata Institute of Fundamental Research, 
Homi Bhabha Road, Bombay - 400 005, India.}
\email{ vijay.patankar@utoronto.ca, rajan@math.tifr.res.in}

\subjclass{Primary 11F80; Secondary 11R45}

\begin{abstract} We show that if two continuous semi-simple \( \ell
\)-adic Galois representations are locally potentially equivalent at a
sufficiently large set of places then they are globaly potentially
equivalent. We also prove an analogous result for arbitrarily varying
powers of character values evaluated at the Frobenius conjugacy
classes.  In the context of modular forms, we prove: given two
non-CM newforms $f$ and $g$ of weight at least two,  such that
$a_p(f)^{n_p}=a_p(g)^{n_p}$ on a set of primes of positive upper
density and for some set of natural numbers $n_p$, then $f$ and $g$
are twists of each other by a Dirichlet character.  
\end{abstract}

\maketitle

\section{Introduction}

Let $S(k,N,\epsilon)$ be the collection of newforms on $\Gamma_1(N)$
of weight $k$ and Nebentypus character $\epsilon$. Given $f\in
S(k,N,\epsilon)$ and $p$ a rational prime coprime to  $N$, let
$a_p(f)$ be the eigenvalue of $f$ with respect to the Hecke operator
at $p$. An initial motivation for this paper is to prove the following
type of recovery theorem with varying local conditions:  

\begin{theorem}\label{thm:mf} Let $f_i\in S(k_i, N_i,\epsilon_i),
~i=1,~2$ be two newforms, one of them a non-CM form of weight at least
two.   Suppose that for a collection $T$ of rational primes $p$
coprime to $N_1N_2$ of  positive upper density, there exists natural
numbers $n_p$  for $p \in T$ such that 
\[ 
a_p(f_1)^{n_p}=a_p(f_2)^{n_p}, \qquad ~~\forall ~p\in T.
\]
Then there exists a Dirichlet character $\chi$  such that 
\[ 
f_2 \simeq f_1\otimes \chi,
\]
i.e., for all $p$ coprime to $N_1N_2$,  
\[ 
a_p(f_2)= a_p(f_1)\chi(p).
\]
\end{theorem}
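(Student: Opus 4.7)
The plan is to pass to the associated $\ell$-adic Galois representations, apply the paper's abstract theorem on arbitrarily varying powers of Frobenius trace values, and then invoke the non-CM hypothesis to rigidify a potential twist-equivalence into a global Dirichlet twist.

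For the setup, attach to each $f_i$ by Deligne's construction a continuous semisimple representation $\rho_i : G_\Q \to \mathrm{GL}_2(\overline{\Q}_\ell)$, unramified outside $\ell N_1 N_2$, with $\mathrm{tr}\,\rho_i(\mathrm{Frob}_p) = a_p(f_i)$ for all $p$ at which $\rho_i$ is unramified. The hypothesis then reads
\[
\bigl(\mathrm{tr}\,\rho_1(\mathrm{Frob}_p)\bigr)^{n_p} \;=\; \bigl(\mathrm{tr}\,\rho_2(\mathrm{Frob}_p)\bigr)^{n_p}, \qquad p\in T,
\]
so the two trace values differ by an $n_p$-th root of unity at each $p\in T$. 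Note that this is weaker than local potential equivalence of $\rho_1$ and $\rho_2$ at $p$ (which would constrain both Frobenius eigenvalues up to roots of unity, not just their sum), so the relevant input is the second of the two abstract results promised in the abstract: the ``varying powers of character values'' theorem. Applying it to $(\mathrm{tr}\,\rho_1,\mathrm{tr}\,\rho_2)$ on the density-positive set $T$ should produce an open subgroup $H\le G_\Q$ (equivalently, a finite Galois extension $L/\Q$) and a finite-order character $\chi$ such that $\mathrm{tr}\,\rho_1 = \chi\cdot \mathrm{tr}\,\rho_2$ on $H$; by Brauer--Nesbitt this gives $\rho_1|_H \simeq (\rho_2\otimes\chi)|_H$.

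To conclude, invoke the non-CM hypothesis on $f_1$ (say). By Ribet's theorem the image of $\rho_1$ is open in a suitable sense, and in particular $\rho_1|_H$ remains absolutely irreducible and non-dihedral for every open $H$. A standard descent argument, examining conjugates of the twisting character by $\mathrm{Gal}(L/\Q)$ and applying Schur's lemma to $\rho_1$, then upgrades the potential twist-equivalence to a global one: $\rho_2 \simeq \rho_1\otimes \chi'$ for a Dirichlet character $\chi'$ of $G_\Q$. Taking traces at Frobenius elements gives $a_p(f_2)=\chi'(p)\,a_p(f_1)$ for a density-one set of $p$, hence (by Chebotarev together with strong multiplicity one for $\mathrm{GL}_2$ newforms) for all $p\nmid N_1N_2$, yielding $f_2\simeq f_1\otimes\chi'$.

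The main obstacle I expect is the rigidification step: the non-CM hypothesis is needed precisely to exclude the possibility that $\rho_1$ is induced from a character of a quadratic extension, in which case a potential twist over $H$ could genuinely fail to descend to a global Dirichlet twist over $\Q$. A secondary, more technical point is verifying that the character $\chi$ produced by the varying-powers theorem is of finite order and hence a bona fide Dirichlet character — this should follow because its values at Frobenius elements are ratios of algebraic numbers, each a root of unity, forced into a number field of bounded degree.
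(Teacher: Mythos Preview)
Your proposal is correct and follows essentially the same route as the paper: attach the Deligne representations, use Ribet to see that the non-CM weight-$\geq 2$ form has connected algebraic monodromy (in fact $GL_2$), apply the varying-powers-of-traces theorem (Theorem~\ref{characterpowers}) to get potential equivalence, and then use Schur's lemma (Proposition~\ref{Schur}) together with absolute irreducibility to descend to a global Dirichlet twist. The paper packages the last two steps into a single invocation of Corollary~\ref{charschur}, whereas you unpack them, but the content is identical; your worries about finite order of $\chi$ and about the descent step are handled automatically by the connected-monodromy hypothesis feeding into that corollary.
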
   When $n_p=n$ is a constant and the density of the
set of places $T$ is one, the above theorem is proved in \cite{Ra2}.

In this note, we look at these  questions in the general context of 
\( \ell \)-adic semisimple representations
of the absolute Galois groups of a global field $K$. In this context,
the generalization  can be viewed from two different
perspectives. We show that \( \ell \)-adic semisimple representations
of the absolute Galois groups of a global field $K$ which are locally
potentially  equivalent at a sufficiently large set of places $K$ are
in fact globally potentially equivalent. On the other hand, we can
consider an equality of arbitrarily varying powers of the character
values evaluated at the Frobenius conjugacy classes at a sufficiently
large set of places, and deduce potential equivalence of the global
representations. 

The method of proof is as in \cite{Se, Ra1}, to look
at the algebraic monodromy groups attached to the Galois
representations, and to use algebraic versions of the Chebotarev
density theorem. This allows the use of algebraic and transcendental
methods, for example the orthogonality relation for characters of
compact groups,  to arrive at the desired conclusion. 

\section{Potentially equivalent Galois representations}

Let $K$ be a global field and let $G_K={\rm Gal}(\bar{K}/K)$ denote
the absolute Galois group over $K$ of a separable closure $\bar{K}$ of
$K$. Let $\ell$ be a rational prime coprime to the characteristic of
\( K \) and let $F$ be a $\ell$-adic local field of characteristic
zero.  Suppose $\rho: ~G_K\to GL_n(F)$ is a continuous semisimple
representation of $G_K$.  We will assume that $\rho$ is unramified
outside a finite set of places of $K$. At a place $v $ of $K$ where
$\rho$ is unramified, let $\sigma_v$ denote the Frobenius conjugacy
class in the quotient group $G_K/{\rm Ker}(\rho)$. By an abuse of
notation, we will also continue to denote by \( \sigma_v \) an element
in the associated conjugacy class.

\medskip\par

For any place $v$ of $K$, let $K_v$ denote the completion of $K$ at
$v$, and $G_{K_v}$ the corresponding local Galois group. Choosing a
place $w$ of $\bar{K}$ lying above $v$, allows us to identity
$G_{K_v}$ with the decomposition subgroup $D_w$ of $G_K$. As $w$
varies this gives a conjugacy class of subgroups of $G_K$. Given a
representation $\rho$ of $G_K$ as above, define the localization (or
the local component) \( \rho_v \) of $\rho$ at $v$, to be the
representation of $G_{K_v}$ obtained by restricting $\rho$ to a
decomposition subgroup. This is well defined upto isomorphism. 

For a finite place \( v \) of $K$, denote by \( N v \) the cardinality
of the residue field of \( K_v \). The upper density of a set \( S \)
of finite places of \( K \) is defined as:
\[ ud (S ) := \limsup_{x \to \infty } \# \{ v \in S | ~ N v \leq x \}/
\pi (x) , 
\] where \( \pi (x ) \) is the number of finite places \( v \) of \( K
\) with  \( N v \leq x \).  The set $S$ has density equal to $d(S)$ if
the limit exists as $x\to \infty$ of $\# \{ v \in S | ~ N v \leq x \}/
\pi (x)$ and is equal to $d(S)$.

\begin{definition} Suppose $\Gamma$ is an abstract group and \( \rho_i
: \Gamma \to GL_n(F) \), for \( i=1,2 \) are two linear
representations of $\Gamma$.  We say that $\rho_1$ and $\rho_2$ are
{\em potentially equivalent} if there is a subgroup $\Gamma'$ of
finite index in $\Gamma$ such that the restriction of $\rho_1$ and
$\rho_2$ to $\Gamma'$ are equivalent. 
\end{definition}

\begin{definition}
Suppose \( \rho_1 \) and \( \rho_2 \) are two Galois representations
of \( G_K \) into \( GL_n ( F ) \). We define \( \rho_1 \) and \(
\rho_2 \) to be \emph{locally potentially equivalent} at a set of
places \( T \) of \( K \), if for each \( v \in T \), the
localizations \( \rho_{1,v} \) and \( \rho_{2,v} \) are potentially
equivalent considered as representations of \( G_{K_v} \). 

\end{definition}

For the representation $\rho$ as above, let \( G \) be the algebraic 
monodromy  group attached to \( \rho \) over \( F \), i.e., 
the smallest algebraic subgroup $G$ of $M$ defined over $F$ such that
$\rho(G_K)\subset G(F)$. 
Let \( G^0 \) be the identity component of \( G \), and let \( \Phi =
G / G^0 \) be the finite group of connected components of \( G \).
Denote by \( G_i \) be the algebraic monodromy groups associated to the
 representations \( \rho_i \) for \( i = 1, 2 \), 
and let \( c_i \) be the number of  
connected components of \( G_i \) for \( i = 1, 2 \).

\medskip\par
One of the main theorems of this note is to observe that  two continuous
semisimple \( \ell \)-adic representations of a global field \( K \)
which are locally potentially equivalent at a large enough set of
places are in fact potentially equivalent:

\begin{theorem}\label{maintheorem}
Suppose $\rho_i: G_K \to GL_n(F), ~i=1,2$ are two continuous
semisimple \( \ell \)-adic representations of the absolute Galois
group $G_K$ of a global field $K$ unramified outside a finite set of
places of $K$, where \( F \) is a local field of characteristic zero
and residue characteristic \( \ell \) coprime to the
characteristic of \( K \).  Suppose there exists a set $T$ of finite
places of $K$ such that for every $v\in T$, $\rho_{1,v} $ and
$\rho_{2,v}$ are potentially equivalent. Assume that the 
upper density of \( T \), 
\[
ud (T) > min \left( 1 - \frac{1}{c_1} , 1 - \frac{1}{c_2} \right).
\]
Then, $\rho_1$ and $\rho_2$ are potentially equivalent, 
viz. there exists a finite extension $L$ of $K$ such that  
\[ 
\rho_1 \mid_{G_L} \simeq \rho_2 \mid_{G_L} .
\]
\end{theorem}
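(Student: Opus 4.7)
The plan is to set up the joint algebraic monodromy of $\rho_1\oplus\rho_2$, reduce the statement by a single Chebotarev step to the case where one of the two representations has connected monodromy, and resolve that base case via equidistribution of Frobenii on the compact form of the monodromy group together with orthogonality of characters.

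\emph{Setup.} Let $H\subseteq G_1\times G_2$ be the algebraic monodromy group of $\rho_1\oplus\rho_2$, with identity component $H^0$ and component group $\Phi_H=H/H^0$; the projections $p_i:H\twoheadrightarrow G_i$ induce surjections $\Phi_H\twoheadrightarrow\Phi_i$. Write $\chi_i=\operatorname{tr}\circ p_i$ for the trace characters, viewed as regular functions on $H$. By Brauer--Nesbitt combined with Chebotarev applied to the finite Galois extension $L/K$ cut out by $G_K\twoheadrightarrow\Phi_H$, the potential equivalence we seek is equivalent to the identity $\chi_1=\chi_2$ of regular functions on $H^0$. After discarding the finitely many ramified places from $T$, the local hypothesis at $v\in T$ gives $m_v\geq 1$ with $\rho_1(\sigma_v^{m_v})$ and $\rho_2(\sigma_v^{m_v})$ conjugate in $GL_n(F)$; after replacing $m_v$ by $m_v\cdot|\Phi_H|$ we may assume $\sigma_v^{m_v}\in H^0(F)$ and $\chi_1(\sigma_v^{m_v k})=\chi_2(\sigma_v^{m_v k})$ for $k=1,\ldots,n$.

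\emph{Reduction to connected monodromy.} Without loss of generality $c_1\leq c_2$, so the hypothesis reads $ud(T)>1-1/c_2$. If $c_2\geq 2$, let $L/K$ be the Galois extension of degree $c_2$ cut out by $G_K\twoheadrightarrow\Phi_2$, so that $\rho_2|_{G_L}$ has connected monodromy. The set $S$ of places of $K$ splitting completely in $L$ has density $1/c_2$, and the elementary inequality for upper densities yields $ud(T\cap S)\geq ud(T)+1/c_2-1>0$. Lifting $T\cap S$ to places of $L$ produces a set $T_L$ of positive upper density at which $\rho_1|_{G_L}$ and $\rho_2|_{G_L}$ remain locally potentially equivalent. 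The new invariants satisfy $c_2'=1$ and $c_1'\leq c_1$, so $c_1'c_2'<c_1c_2$; inducting on $c_1c_2$, we are reduced to the base case $c_2=1$ with $ud(T)>0$.

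\emph{Base case.} For each $m\geq 1$ consider the closed subvariety
\[Y_m=\{h\in H:\chi_1(h^{mk})=\chi_2(h^{mk}),\ 1\leq k\leq n\}\subseteq H,\]
so that $\sigma_v\in Y_{m_v}$ for every $v\in T$. For a connected component $H_\phi$ of $H$ not contained in any $Y_m$, the union $\bigcup_m(Y_m\cap H_\phi)$ is a countable union of proper algebraic subvarieties of the irreducible variety $H_\phi$, whose $F$-points form a Haar-null subset of the compact image of $H_\phi(F)$. Serre's algebraic Chebotarev in the form of equidistribution of Frobenius conjugacy classes in the compact monodromy subgroup then forces the density of $v\in T$ with $\sigma_v$ in such a null set to vanish, so the hypothesis $ud(T)>0$ produces a ``good'' component $H_\phi\subseteq Y_m$ for some $\phi,m$. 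Writing $H_\phi=h_\phi H^0$ and taking $m$ to be a suitable multiple of the order of $\phi$ in $\Phi_H$, one promotes the equality on $H_\phi$ to $\chi_1=\chi_2$ on $H^0$ by analysing the conjugation automorphism $\alpha=\operatorname{Ad}(h_\phi)$ of $H^0$, invoking surjectivity on $\bar F$-points of the associated norm map on the connected reductive group $H^0$, and exploiting orthogonality of characters on the compact form of $H^0$.

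\emph{Main obstacle.} The principal difficulty is the $\exists m_v$ quantifier in the local hypothesis, which produces the countable union $\bigcup_m Y_m$ to control; since upper density is not countably subadditive, the argument genuinely requires Serre's stronger equidistribution statement --- that Haar-null subsets of the compact monodromy carry zero Frobenius density --- rather than Chebotarev for each individual $Y_m$ separately. The secondary delicate point is the promotion from a single good coset to the identity component; this rests on the algebraic-group structure of $H^0$ as a connected reductive group and on surjectivity of norm/power maps, which are the places where the characteristic-zero hypothesis on $F$ and the reductivity of the $\ell$-adic monodromy are essential.
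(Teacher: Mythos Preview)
Your overall architecture---reduce to connected monodromy on one side, then find a connected component of the joint monodromy inside a trace-matching variety---is the same as the paper's. But there are two issues.

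\textbf{Reduction step.} With $c_1\le c_2$ you have $1-1/c_1\le 1-1/c_2$, so the hypothesis reads $ud(T)>1-1/c_1$, not $>1-1/c_2$. Passing to the degree-$c_2$ extension cut out by $\Phi_2$ then requires the stronger bound $ud(T)>1-1/c_2$, which you do not have. The paper instead passes to the degree-$c_1$ extension cut out by $\Phi_1$, making $G_1$ connected; this is exactly what the density hypothesis permits.

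\textbf{The countable-union step.} This is the real gap. You correctly identify that the quantifier $\exists m_v$ forces you to control $\bigcup_m Y_m$, and you observe that each $Y_m\cap H_\phi$ is Haar-null when $H_\phi\not\subset Y_m$. But your inference ``equidistribution $\Rightarrow$ Haar-null sets carry zero Frobenius density'' is not valid as stated: weak convergence of Frobenius measures to Haar measure only controls sets whose \emph{boundary} is null, and the closure of $\bigcup_m Y_m$ could well be all of $H_\phi$. So the argument, as written, does not close.

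The paper avoids this entirely by a short lemma you are missing: if semisimple $\sigma_1,\sigma_2\in GL_n(F)$ satisfy $\sigma_1^k\sim\sigma_2^k$ for \emph{some} $k$, then $\sigma_1^m\sim\sigma_2^m$ for a \emph{fixed} $m$ depending only on $n$ and $F$. The reason is that the eigenvalues of $\sigma_1,\sigma_2$ lie in an extension $F'/F$ of degree at most $(n!)^2$, the hypothesis forces $\alpha_i/\beta_i$ to be a root of unity in $F'$, and a fixed-degree extension of a local field contains only finitely many roots of unity. Thus all the $Y_{m_v}$ collapse to a single $Y_m$, the countable union disappears, and one invokes the algebraic Chebotarev theorem once for the closed set $X_m=\{(g_1,g_2):\mathrm{Tr}(g_1^m)=\mathrm{Tr}(g_2^m)\}$ to find $G^\phi\subset X_m$. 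The promotion to $X_1$ is then just the observation that $x\mapsto x^m$ maps $G^\phi$ onto another component $G^\psi\subset X_1$, after which the unitary-trace argument finishes. Your ``norm map'' and ``suitable multiple of the order of $\phi$'' are an unnecessarily vague substitute for this one-line power-map step.
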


\begin{corollary}\label{cor-mt}
 With the hypothesis of Theorem \ref{maintheorem},
assume further that one of the algebraic monodromy groups, say $G_1$
is connected and the representation $\rho_1$ is absolutely
irreducible. Then there exists a Dirichlet character $\chi:~G_K\to
F^*$ such that 
\[\rho_2\simeq \rho_1\times \chi.\] 

In particular, if for every $v\in T$ there exists a character $\chi_v$
of $G_{K_v}$ such that 
\[\rho_{2,v}\simeq  \rho_{1,v}\times \chi_v, \] then there exists a
global character $\eta$ of $G_K$ such that 
\[ \rho_2\simeq  \rho_1\times \eta. \]
\end{corollary}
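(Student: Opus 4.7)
The plan is to apply Theorem \ref{maintheorem} to pass to a finite extension on which the two representations become genuinely isomorphic, and then extract the global character via a Schur-type argument enabled by the connectedness and absolute irreducibility hypotheses.

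First I would invoke Theorem \ref{maintheorem} to produce a finite extension $L/K$, which may be taken Galois over $K$ by replacing $L$ with its Galois closure (further restriction preserves the equivalence), such that $\rho_1|_{G_L} \simeq \rho_2|_{G_L}$. The role of the additional hypotheses is to guarantee that absolute irreducibility survives the restriction. Indeed, $\rho_1(G_L)$ has finite index in $\rho_1(G_K)$, so its Zariski closure is a closed algebraic subgroup of finite index in the connected group $G_1$, hence equals $G_1$. Since $G_1$ acts absolutely irreducibly on $F^n$, the same holds for $\rho_1(G_L)$, and therefore $\rho_1|_{G_L}$ is absolutely irreducible.

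Next, I would fix an intertwiner $\phi : \rho_1|_{G_L} \xrightarrow{\sim} \rho_2|_{G_L}$ and for each $g \in G_K$ define $\phi_g := \rho_2(g) \circ \phi \circ \rho_1(g)^{-1}$. Using the normality of $G_L$ in $G_K$, a short calculation shows that $\phi_g$ is again a $G_L$-equivariant map from $\rho_1|_{G_L}$ to $\rho_2|_{G_L}$. Schur's lemma applied to the absolutely irreducible representation $\rho_1|_{G_L}$ then yields a unique scalar $\chi(g) \in F^\times$ with $\phi_g = \chi(g) \phi$. Direct verification shows that $\chi : G_K \to F^\times$ is a multiplicative, continuous character (continuity coming from continuity of $\rho_1, \rho_2$ and the intertwining formula), and rearranging $\phi_g = \chi(g) \phi$ gives $\rho_2(g) = \chi(g) \cdot \phi \rho_1(g) \phi^{-1}$, i.e.\ $\rho_2 \simeq \rho_1 \otimes \chi$. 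This establishes the first assertion.

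For the second assertion, the hypothesis $\rho_{2,v} \simeq \rho_{1,v} \otimes \chi_v$ at each $v \in T$ is, a fortiori, a statement of local potential equivalence: on the open finite-index subgroup of $G_{K_v}$ on which $\chi_v$ becomes trivial, the localizations agree. Hence the hypothesis of the first part holds, producing the required global character $\eta$. The main obstacle I anticipate lies in the first step, namely the preservation of absolute irreducibility under the restriction $G_K \to G_L$: this is precisely where connectedness of $G_1$ is essential and where a bit of algebraic-group bookkeeping is needed. Once that is in place, the Schur-type extraction of $\chi$ is routine.
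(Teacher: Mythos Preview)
Your proposal is correct and follows exactly the paper's approach: apply Theorem~\ref{maintheorem} to obtain potential equivalence over some finite Galois extension $L$, use connectedness of $G_1$ to ensure $\rho_1|_{G_L}$ remains absolutely irreducible, and then extract the global twist via Schur's lemma. The paper packages this second step as Proposition~\ref{Schur} (quoted from \cite{Ra1,Ra2}) and simply says that combining it with Theorem~\ref{maintheorem} yields the corollary; you have written out that Schur-type argument and the irreducibility-preservation explicitly, which matches the parenthetical remark in Proposition~\ref{Schur}.
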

\begin{remark} There can be more than one $\chi_v$ satisfying
$\rho_{2,v}\simeq  \rho_{1,v}\times \chi_v$. Hence we cannot expect
that the local component of $\eta$ at $v$ be equal to $\chi_v$. 
\end{remark}

\subsection{Preliminaries} 
For the proof of Theorem \ref{maintheorem},
we recall the main results proved in \cite{Ra1}.   First, we recall Theorem 3
of \cite{Ra1}, an algebraic interpretation of results proved in
Section 6 (especially Proposition 15) of \cite{Se}, giving an
algebraic  formulation of the Chebotarev density theorem 
for the density of places satisfying an algebraic conjugacy condition:

\begin{theorem}\label{Algebraic-Chebotarev}\cite[Theorem 3]{Ra1}~~
Let \( M \) be an algebraic group defined over \( F \). Suppose 
\[
\rho : G_K \rightarrow M(F) 
\]
is a continuous representation unramified
 outside a finite set of places of $K$. 

Suppose  \( X \) is a closed subscheme of \( M\) 
defined over \( F \) and stable under the adjoint action of \( M \) 
on itself. Let 
\[
C := X(F) \cap \rho ( G_K ) .
\]
Let \( \Sigma_u \) denote the set of finite places of \( K \)
 at which \( \rho \) is unramified. 
Then the set 
\[
S : =  \{ v \in \Sigma_u  ~| ~ \rho ( \sigma_v ) \subset C \}.
\]
has a density given by 
\[
d(S) = \frac{ | \Psi | }{ | \Phi | }, 
\]
where \( \Psi = \{ \phi \in \Phi ~|~ G^\phi \subset X \} \).

\end{theorem}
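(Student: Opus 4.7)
The plan is to reduce the problem to the classical Chebotarev density theorem applied to the finite group of connected components of $G$, and then handle each component separately using an $\ell$-adic equidistribution argument in the style of Serre. Let $G$ be the Zariski closure of $\rho(G_K)$ in $M$, so $\rho(G_K)\subset G(F)$ is Zariski dense. Writing $G=\bigsqcup_{\phi\in\Phi}G^\phi$ for the decomposition into connected components, the Zariski density of $\rho(G_K)$ together with the discreteness of $\Phi$ forces the composition $G_K\to G(F)\to\Phi$ to be surjective. Its kernel cuts out a finite Galois extension $L/K$ with $\mathrm{Gal}(L/K)\cong\Phi$.

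For each $\phi\in\Phi$, set
\[
T_\phi := \{\, v\in\Sigma_u \mid \rho(\sigma_v)\in G^\phi(F)\,\}.
\]
Since $S=\bigsqcup_\phi (S\cap T_\phi)$, it suffices to compute $d(S\cap T_\phi)$ for each $\phi$. The classical Chebotarev density theorem applied to the extension $L/K$ gives $d(T_\phi)=1/|\Phi|$.

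If $\phi\in\Psi$, then $G^\phi\subset X$, so every Frobenius in $T_\phi$ already lies in $X(F)\cap\rho(G_K)=C$; hence $S\cap T_\phi=T_\phi$ contributes $1/|\Phi|$. If $\phi\notin\Psi$, then $X\cap G^\phi$ is a proper Zariski-closed conjugation-invariant subvariety of the irreducible variety $G^\phi$. I would invoke Serre's $\ell$-adic equidistribution theorem (Proposition 15 of \cite{Se}): the Frobenius classes $\rho(\sigma_v)$ for $v\in T_\phi$ equidistribute in the compact subset $\rho(G_K)\cap G^\phi(F)$ with respect to a Haar-type measure on the compact $\ell$-adic group, and this measure assigns zero mass to every proper Zariski-closed subset of $G^\phi$. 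Consequently $d(S\cap T_\phi)=0$. Summing the contributions yields $d(S)=|\Psi|/|\Phi|$.

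The main obstacle is the last step: one must justify that Frobenius elements equidistribute with respect to a measure which is suitably regular with respect to the Zariski topology, so that proper closed subvarieties contribute nothing to the Dirichlet density. This is Serre's nontrivial input; in his formulation it proceeds through the trace formalism, meromorphic continuation and nonvanishing on $\Re s=1$ of $L$-series attached to virtual characters of $G$, and Tauberian passage from pole orders to Dirichlet densities. Assembling these inputs uniformly across all connected components and exhibiting the measure-theoretic vanishing on proper closed subschemes is the technical heart of the argument.
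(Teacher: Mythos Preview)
Your proposal is correct and follows essentially the same strategy as the paper's own (outlined) argument: decompose $C$ along the connected components $G^\phi$, use classical Chebotarev on the finite quotient $\Phi$ to see each component has density $1/|\Phi|$, and then invoke Serre's $\ell$-adic density result to show that a proper closed conjugation-invariant subvariety of $G^\phi$ captures a set of Frobenii of density zero. The paper phrases the dichotomy slightly differently---either $C^\phi:=C\cap G^\phi(F)$ contains an $\ell$-adic open subset of $\rho(G_K)\cap G^\phi(F)$, forcing its Zariski closure to be all of $G^\phi$, or it is a proper closed analytic subset with no interior, in which case Serre's \cite{Se} (Corollary~1 to Theorem~10) gives $\pi_{C^\phi}(x)=o(x/\log x)$---but this is the same mechanism you describe via equidistribution and the vanishing of Haar measure on proper Zariski-closed sets.
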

\begin{remark} Since we will be able to compare only the
semi-simplifications of the representations, we assume that the
representations are semi-simple. In particular, this implies that the
algebraic monodromy group is a reductive algebraic group defined over
\( F \).  
\end{remark}

\begin{corollary}\label{semisimple} Let $\rho$ be a semisimple
continuous $l$-adic representation of $G_K$ to $GL_n(F)$ unramified
outside a finite set of places of $K$.  Then there is a density one
set of places of $K$ at which $\rho$ is unramified and the
corresponding Frobenius conjugacy class is semisimple. 
\end{corollary}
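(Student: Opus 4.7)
The plan is to apply the algebraic Chebotarev density theorem (Theorem~\ref{Algebraic-Chebotarev}) to a suitable closed, conjugation-stable subvariety $X$ of the algebraic monodromy group $G$ of $\rho$. Since $\rho$ is semisimple, $G$ is reductive over $F$. Take $X \subset G$ to be the Zariski closure of the set of non-semisimple elements of $G(F)$ (viewed inside $GL_n(F)$); then $X$ is closed in $G$, stable under the adjoint action, and contains every non-semisimple element of $G(F)$.

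The key claim is that no connected component $G^\phi$ of $G$ is contained in $X$. Granting this, the set $\Psi$ appearing in Theorem~\ref{Algebraic-Chebotarev} is empty, so the set $\{v \in \Sigma_u : \rho(\sigma_v) \in X(F)\}$ has density $|\Psi|/|\Phi| = 0$. Since every unramified place at which the Frobenius is non-semisimple lies in this set, its complement is the desired density-one subset of $\Sigma_u$ at which $\rho(\sigma_v)$ is semisimple.

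To verify the claim, I show that the non-semisimple locus has positive codimension in each $G^\phi$. For any $g \in G^\phi$ the Jordan decomposition $g = g_s g_u$ in $GL_n(F)$ has $g_u$ unipotent; in characteristic zero every unipotent element of an algebraic group lies in its identity component, so $g_u \in G^0$ and hence $g_s \in g G^0 = G^\phi$. Thus each $G^\phi$ contains semisimple elements. By Steinberg's theorem, for a semisimple $g_s \in G$ the connected centralizer $C_{G^0}(g_s)^0$ is reductive, so its unipotent variety has codimension equal to $\mathrm{rank}(C_{G^0}(g_s)^0)$. Parametrizing $g \in G^\phi$ by its Jordan decomposition $(g_s, g_u)$, with $g_u$ a unipotent element of $C_{G^0}(g_s)$, and stratifying by the $G^0$-conjugacy class of $g_s$, a dimension count yields that the non-semisimple locus $\{g_u \ne 1\}$ has codimension at least one in $G^\phi$.

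The main technical obstacle is precisely this codimension bound in the disconnected case. For the identity component it follows from the classical fact that regular semisimple elements are Zariski dense in a connected reductive group; for nontrivial cosets $G^\phi$ one must instead appeal to Steinberg's reductivity theorem for the connected centralizer of a semisimple element in a possibly disconnected reductive group. With this in hand, Theorem~\ref{Algebraic-Chebotarev} completes the proof.
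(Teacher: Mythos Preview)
Your proposal is correct and follows the same strategy as the paper's (suppressed) proof: apply Theorem~\ref{Algebraic-Chebotarev} to a closed conjugation-invariant subvariety $X$ containing all non-semisimple elements, and verify that no connected component $G^\phi$ lies in $X$. The only difference is in the verification step: the paper takes $X$ to be the complement of the regular semisimple locus and produces a regular semisimple element in each $G^\phi$ directly (starting from a semisimple element obtained via the Jordan decomposition and multiplying by a suitable element of a maximal torus in its centralizer), whereas you take $X$ to be the Zariski closure of the non-semisimple locus and bound its dimension via Steinberg's reductivity of $C_{G^0}(g_s)^0$ together with a stratified dimension count. The paper's construction is a bit more elementary and self-contained, while your argument imports a stronger structural input (Steinberg); either way the conclusion $\Psi=\emptyset$ follows and the corollary drops out.
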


The advantage of the algebraic prescription of positive density  is
that it allows us to use techniques from the theory of reductive
algebraic groups, to change the base field and work over complex
numbers.  This allows the use of transcendental methods. 
The following result, essentially proved in Theorem 2 of
\cite{Ra1}, depends on the fact that the identity matrix is the unique
matrix in the unitary group $U(n)$ on $n$-variables having trace $n$
(see also the proof of Theorem \ref{characterpowers}): 

\begin{theorem}\label{positive-density}
Suppose \( \rho_1 \) and \( \rho_2 \) are two semisimple 
Galois representations of \( G_K \) into \( GL_n ( F ) \).
Let 
\[
\rho : = \rho_1 \times \rho_2 : G_K \to GL_n \times GL_n 
\] be the direct sum of \( \rho_1 \) and \( \rho_2 \).  Assume that
the algebraic monodromy group $G_1$  attached to \(\rho_1 \) over \( F
\) is connected.  Suppose there exists a positive density of
unramified finite places $T$ of $F$ for $\rho$, such that
\[\rm{Trace} (\rho_1(\sigma_v)) = \rm{Trace} (\rho_2(\sigma_v ))
\quad {\mbox{for}} ~ v\in T,\] where $\rho_1(\sigma_v)$
(resp. $\rho_2(\sigma_v)$) denotes the Frobenius conjugacy class at
the place $v$ associated respectively to the representations $\rho_1$
and $\rho_2$. Then  $\rho_1$ and $\rho_2$ are potentially equivalent,
i.e.,  there exists a finite extension \( L \) of \( K \) such that 
\[ \rho_1 |_{G_L} \simeq \rho_2 |_{G_L} .\]

\end{theorem}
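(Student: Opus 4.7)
The plan is to apply the algebraic Chebotarev density theorem (Theorem~\ref{Algebraic-Chebotarev}) to the joint representation $\rho := \rho_1 \times \rho_2 : G_K \to GL_n \times GL_n$ together with the closed subscheme
\[
X := \{ (g_1, g_2) \in GL_n \times GL_n : \mathrm{tr}(g_1) = \mathrm{tr}(g_2) \},
\]
which is stable under the adjoint action of $GL_n \times GL_n$ on itself because trace is a class function. Let $G \subset GL_n \times GL_n$ be the algebraic monodromy of $\rho$ over $F$, with identity component $G^0$ and component group $\Phi = G/G^0$; write $p_1, p_2$ for the projections to the two factors, so $p_i(G) = G_i$. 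I would first note that $T$ is contained in $\{ v : \rho(\sigma_v) \in X \}$, which by Theorem~\ref{Algebraic-Chebotarev} has density $|\Psi|/|\Phi|$ for $\Psi := \{\phi \in \Phi : G^\phi \subset X\}$; positive density of $T$ therefore forces $\Psi \neq \emptyset$, so that some coset $G^{\phi_0}$ is entirely contained in $X$.

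Next I would fix a finite Galois extension $L/K$ such that $\rho(G_L)$ is Zariski dense in $G^0$. By the Brauer-Nesbitt theorem and the semisimplicity hypothesis, $\rho_1|_{G_L} \simeq \rho_2|_{G_L}$ is equivalent to $\mathrm{tr}\circ p_1$ and $\mathrm{tr}\circ p_2$ agreeing on $G^0$, i.e.\ to the inclusion $G^0 \subset X$; the entire task therefore reduces to upgrading $G^{\phi_0} \subset X$ to $G^0 \subset X$. Fixing a lift $g_0 = (a_1, a_2) \in G^{\phi_0}$ and translating by $g_0^{-1}$ yields the shifted identity
\[
\mathrm{tr}(a_1 h_1) = \mathrm{tr}(a_2 h_2) \qquad \text{for every } (h_1, h_2) \in G^0.
\]
Connectedness of $G_1$ enters here in the form $p_1(G^0) = G_1 = p_1(G^{\phi_0})$, since a closed connected subgroup of finite index in the connected group $G_1$ must equal $G_1$.

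To pass from the shifted identity to the unshifted equality $\mathrm{tr}(h_1) = \mathrm{tr}(h_2)$ on $G^0$, I would embed $F \hookrightarrow \C$ and work with a maximal compact subgroup $K_c$ of $G^0(\C)$, which is Zariski dense in $G^0$. The two sides of the shifted identity are linear combinations of matrix coefficients of the $K_c$-representations $p_1$ and $p_2$, and Peter-Weyl orthogonality couples the isotypic decompositions of these two representations. The fact singled out in the discussion preceding the theorem --- that $I$ is the unique element of $U(n)$ with trace $n$ --- is then applied to a unitary operator constructed from the pair $(a_1, a_2)$ to force that operator to be the identity, producing the unshifted trace identity on $G^0$ and hence $G^0 \subset X$. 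The main obstacle will be making this orthogonality step precise: $a_1$ need not commute with $p_1(G^0) = G_1$, so the shift cannot be absorbed inside a single isotypic component without carefully tracking the bimodule structure of regular functions on $G^0$ under $K_c \times K_c$ acting by left and right translation, and it is in ruling out a genuine nontrivial twist $(a_1, a_2)$ realizing the shifted identity while $G^0 \not\subset X$ that the connectedness of $G_1$ is genuinely used.
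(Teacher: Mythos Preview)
Your setup is exactly right: apply Theorem~\ref{Algebraic-Chebotarev} to the trace variety $X$ to obtain a component $G^{\phi_0}\subset X$, and reduce the theorem to the purely group-theoretic claim $G^0\subset X$. You also correctly isolate the consequence of connectedness of $G_1$, namely $p_1(G^{\phi_0})=p_1(G^0)=G_1$. The gap is in what you do with this fact.

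You pick an \emph{arbitrary} lift $g_0=(a_1,a_2)\in G^{\phi_0}$, obtain the shifted identity $\mathrm{tr}(a_1h_1)=\mathrm{tr}(a_2h_2)$ on $G^0$, and then propose to undo the shift via Peter--Weyl orthogonality on a maximal compact of $G^0(\C)$. As you yourself note, $a_1$ need not commute with $p_1(G^0)$, so the matrix coefficients $k\mapsto \mathrm{tr}(a_1p_1(k))$ are not characters and do not sit in single isotypic components; your proposal stops short of saying how to control this, and it is not clear that an orthogonality argument of this shape can be made to work without further input.

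The paper's argument (visible in the proof of Theorem~\ref{characterpowers} with $m=1$) avoids this entirely by choosing the lift more carefully. Pass to $\C$ and take a maximal compact $J$ of $G(\C)$, not of $G^0(\C)$; then $J$ meets every component, and $p_1(J^{\phi_0})=p_1(J^0)$ is a compact subgroup of the connected group $G_1(\C)$, hence contains the identity. Thus one may take $g_0=(1,y)\in J^{\phi_0}$. Now $(1,y)\in X$ gives $\mathrm{tr}(y)=n$, and since $y$ lies in a compact subgroup of $GL_n(\C)$ its eigenvalues have absolute value $1$; the ``trace $n$ in $U(n)$'' fact forces $y=I_n$. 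Hence $(1,1)\in G^{\phi_0}$, so $\phi_0$ is the identity coset and $G^0=G^{\phi_0}\subset X$, finishing the proof. The connectedness of $G_1$ is used exactly once, to guarantee that $1$ lies in $p_1(J^{\phi_0})$, and no orthogonality or bimodule bookkeeping is needed.
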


\begin{remark} Suppose the number of connected components $G_1$ is
$c_1$. Assume further that the set of places $v\in T$ at which the
traces of the two representations are equal has upper density strictly
greater than $1-1/c_1$. By Theorem \ref{Algebraic-Chebotarev} it
follows that there is a connected component of $G$ that surjects onto
the connected component of identity in $G_1$. Arguing as above, we can
conclude that the representations are potentially equivalent. 
\end{remark}

\subsection{Proof of Theorem \ref{maintheorem}} We first observe the
following: 

\begin{lemma}\label{mainlemma} Let \( \sigma_1 \) and \( \sigma_2 \)
be two semisimple elements in \( GL_n ( F ) \) where \( F \) is local
field (finite extension of \( \Q_\ell \)). Suppose there exists a
non-zero integer \( k \) such that \( \sigma_1^k \) and \( \sigma_1^k
\) are conjugate in \( GL_n (F) \). Then, there exists a positive
integer \( m \) depending only on \( n \) and \( F \) such that
$\sigma_1^m$ and $\sigma_2^m$ are conjugate in \( GL_n (F) \).
\end{lemma}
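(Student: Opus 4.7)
The plan is to pass to eigenvalues over \( \bar F \) and exploit the fact that a finite extension of a local field of characteristic zero contains only finitely many roots of unity, with order bounded in terms of the degree. Assume without loss of generality that \( k > 0 \) (negating \( k \) only inverts eigenvalues and does not alter the multiset condition below). Since \( \sigma_1 \) and \( \sigma_2 \) are semisimple, the conjugacy hypothesis \( \sigma_1^k \sim \sigma_2^k \) is equivalent to equality of the characteristic polynomials of the two \( k \)-th powers, hence to an equality of multisets of \( k \)-th powers of eigenvalues. Writing the eigenvalues of \( \sigma_i \) as \( \alpha_{i,1}, \ldots, \alpha_{i,n} \in \bar F \), I reindex so that \( \alpha_{1,j}^k = \alpha_{2,j}^k \) for each \( j \), and define \( \zeta_j := \alpha_{1,j}/\alpha_{2,j} \), a \( k \)-th root of unity.

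I next seek a bound on the order of the \( \zeta_j \) that is uniform in \( k \). Let \( E \) be the splitting field over \( F \) of the product of the characteristic polynomials of \( \sigma_1 \) and \( \sigma_2 \); then \( [E:F] \leq (2n)! \), and \( E \) is a local field of characteristic zero containing all \( \alpha_{i,j} \) and hence all \( \zeta_j \). The torsion subgroup \( \mu(E) \) of \( E^* \) is finite: its prime-to-\( \ell \) part injects into the finite residue field of \( E \), and the order \( \ell^r \) of its \( \ell \)-primary part satisfies \( \ell^{r-1}(\ell-1) \leq [E:\Q_\ell] \) via the cyclotomic degree formula. Let \( m = m(n,F) \) be the least common multiple of \( |\mu(E')| \) as \( E' \) ranges over finite extensions of \( F \) of degree at most \( (2n)! \); this is a positive integer depending only on \( n \) and \( F \), and \( \zeta_j^m = 1 \) for every \( j \).

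Consequently \( \alpha_{1,j}^m = \alpha_{2,j}^m \) for all \( j \), so \( \sigma_1^m \) and \( \sigma_2^m \) share the same characteristic polynomial. In characteristic zero any power of a semisimple matrix is again semisimple, so both lie in the semisimple locus of \( GL_n(F) \); and two semisimple elements of \( GL_n(F) \) with identical characteristic polynomial have the same rational canonical form (the minimal polynomial is separable, so the primary decomposition over \( F \) is determined by the \( F[x] \)-factorization of the characteristic polynomial), and are thus \( GL_n(F) \)-conjugate. The only delicate step is the uniformity of the bound on \( |\mu(E)| \) over bounded-degree extensions \( E/F \); this is standard local-field arithmetic, and \( m \) can be made explicit in terms of \( n \), the residue characteristic of \( F \), and the absolute ramification index of \( F \).
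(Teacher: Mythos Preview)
Your proof is correct and follows essentially the same approach as the paper: pass to eigenvalues in a bounded-degree extension of \( F \), use that such an extension contains only finitely many roots of unity (with order bounded in terms of \( n \) and \( F \)), and conclude that a uniform \( m \)-th power equalizes the eigenvalue multisets. The only cosmetic differences are your degree bound \( (2n)! \) versus the paper's \( (n!)^2 \), and your more explicit justifications for the finiteness of \( \mu(E) \) and for \( GL_n(F) \)-conjugacy of semisimple matrices with equal characteristic polynomials.
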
 
\begin{remark} Since we are working in \( GL_n \), two elements are
conjugate in \( GL_n ( F) \) if and only if they are  conjugate in \(
GL_n ( \bar{F} ) \). 
\end{remark}

\begin{proof} Choose an algebraic closure \( \bar{F} \) of \( F
\). Let \( F' \) be the extension of \( F \) in \( \bar{F} \)
generated by the eigenvalues of \( \sigma_1 \) and \( \sigma_2 \) in
\( \bar{F} \). It is easy to see that 
\[ [ F' : F ] \leq ( n! )^2 .
\]  The number of roots of unity contained in such a field \( F' \) is
bounded above by some positive integer \( m_0 \) depending only on  \(
[ F' : \Q_\ell ] \), thus depending only on $n$ and \( [ F : \Q_\ell ]
\). 

Let \( \{ \alpha_1, \cdots, \alpha_n \} \)  (respectively \( \{
\beta_1 , \cdots, \beta_n \} \)) be the eigenvalues of \( \sigma_1 \)
(respectively \( \sigma_2 \)). Since by our hypothesis \( \sigma_1^ k
\) is conjugate of \( \sigma_2^k \) we have up to a permutation, 
\[ \alpha_i^k  = \beta_i^k , ~ \qquad \forall ~ 1 \leq i \leq n .
\] Hence \( \alpha_i \) and \( \beta_i \) differ by a root of unity,
which lies in \( F' \).  Thus from the above comment, for \( m = m_0!
\) we have: 
\[ \alpha_i^m  = \beta_i^m , ~ \qquad  \forall ~ 1 \leq i \leq n .
\] But since both \( \sigma_1 \) and \( \sigma_2 \) are semisimple
elements in \( GL_n ( \bar{F} ) \), \( \sigma_1^m \) and \( \sigma_2^m
\) are conjugate in \( GL_n (F) \). 
\end{proof}

\medskip\par

\begin{lemma}\label{densityinextensions} Let $L/K$ be a Galois
extension of degree $d$. Suppose $T$ is a set of places of $K$ of
upper density $\delta >1-1/d$. Then the set of places $T'$ of $L$
lying above  places in $T$ is of positive upper density.  
\end{lemma}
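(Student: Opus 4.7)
The plan is to isolate within $T$ those places of $K$ that split completely in $L$, and to observe that each such place contributes $d$ places of the same norm to $T'$. Two inputs drive the argument: the Chebotarev density theorem for $L/K$, which says that the set $S$ of finite places of $K$ that split completely in $L$ has density exactly $1/d$ (so its complement among the finite places has density $1-1/d$); and the analogue of the prime number theorem for the global fields $K$ and $L$, which gives $\pi_L(x)/\pi_K(x) \to 1$ as $x\to\infty$, where $\pi_K(x), \pi_L(x)$ count finite places of the respective fields of norm at most $x$.

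First I would decompose $T = (T \cap S) \sqcup (T \setminus S)$ and estimate, for every $x$,
\[
\frac{\#\{v \in T : Nv \leq x\}}{\pi_K(x)} \leq \frac{\#\{v \in T \cap S : Nv \leq x\}}{\pi_K(x)} + \frac{\#\{v \notin S : Nv \leq x\}}{\pi_K(x)}.
\]
Because the last term on the right converges (not merely along a subsequence) to $1-1/d$, taking $\limsup$ and using $ud(T) > 1-1/d$ gives
\[
ud(T \cap S) \;\geq\; ud(T) - (1-1/d) \;>\; 0.
\]
Now, for each $v \in T \cap S$ there are exactly $d$ places $w$ of $L$ lying above $v$, all satisfying $Nw = Nv$, and by definition all lying in $T'$. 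Hence
\[
\#\{w \in T' : Nw \leq x\} \;\geq\; d \cdot \#\{v \in T \cap S : Nv \leq x\}.
\]
Dividing by $\pi_L(x)$ and using $\pi_L(x)/\pi_K(x) \to 1$ gives $ud(T') \geq d \cdot ud(T \cap S) > 0$, as required.

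The only real subtlety is the decomposition step: it works precisely because the complement of $S$ has a genuine density (not merely an upper density) equal to $1-1/d$, which is exactly what allows the corresponding term to be subtracted across the $\limsup$ in the desired direction. Everything else is routine bookkeeping with the splitting of primes in Galois extensions; in fact the argument never uses anything about $T$ beyond its upper density.
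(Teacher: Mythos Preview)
Your argument is correct and follows essentially the same route as the paper's proof: intersect $T$ with the set of primes that split completely in $L$, use Chebotarev (and the fact that the complement of the split set has an actual density $1-1/d$) to bound the upper density of this intersection from below by $\delta-(1-1/d)$, and then count the $d$ places of $L$ above each such prime. Your write-up is simply more explicit about the $\limsup$ manipulation and the use of $\pi_L(x)/\pi_K(x)\to 1$, which the paper leaves implicit.
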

\begin{proof} Let the upper density of $T$ be $\delta$. By Chebotarev
density theorem and hypothesis, there is a subset $T_L$ of places of
$K$ with upper density $\delta-(1-1/d)$ such that every place $v\in
T_L$  splits completely in $L$. Then the set of places $T'$ of $L$
lying above a place in $T_L$ is of positive upper density
$d(\delta-(1-1/d))$.
\end{proof}

\medskip\par We now give the proof of Theorem \ref{maintheorem}.

\begin{proof} [Proof of Theorem \ref{maintheorem}] We first observe
that by Corollary \ref{semisimple},  we can assume that \( T \)
consists of places \( v \) of \( K \) at which both \( \rho_1 \) and
\( \rho_2 \) are unramified and the corresponding Frobenius conjugacy
classes are semisimple. 

Let \( \rho := \rho_1 \times \rho_2 \). Let \( G, ~G_1 \), and \( G_2
\) be respectively the algebraic monodromy groups of \( \rho , \rho_1
\), and \( \rho_2 \). By renaming, we can assume that \( c_1 \leq c_2
\). Let \( G^0 \) and \( G_1^0 \) be respectively the connected
components of identity of \( G \) and \( G_1 \). Let \( \Phi := G /
G^0 \) be the group of connected components of \( G \). For every \(
\phi \in \Phi \), denote by \( G^\phi \) be the corresponding
connected component. 

Let $L_1$ be the finite extension of $K$ obtained by taking the
invariant field of the kernel of the homomorphism $G_K\to
G_1/G_1^0$. The degree of $L_1$ over $K$ is precisely $c_1$. Let $T'$
be the set of places of $L_1$ lying above a place in $T$ of $K$. By
Lemma \ref{densityinextensions} and the hypothesis, the upper density
of $T'$ is positive. 

Further the algebraic monodromy group of $\rho_1|_{G_{L_1}}$ is
connected, given by the connected component $G_1^0$ of $G_1$. Hence we
have reduced to the case that $G_1$ is connected. 

For every $v\in T'$, it follows that there exists an integer $n_v'$
divisible by $n_v$ (we can assume $n_v=n_v'$ by working with $v$ of
degree one over $K$) such that  \( \rho_1|_{G_{L_1}} (
{\sigma_v}^{n_v} ) \) and  \( \rho_2 |_{G_{L_1}}( {\sigma_v}^{n_v} )
\) are conjugate in \( GL_n ( F ) \). 

By Lemma \ref{mainlemma}, there exists a positive integer \( m \)
independent of \( v \in T' \), and such that for all  \( v \in T' \),
\( \rho_1 ( {\sigma_v}^m ) \) and \( \rho_2 ( {\sigma_v}^m ) \) are
conjugate in \( GL_n ( F ) \). In particular, we have 
\[ \rho ( \sigma_v ) \in X_m , ~~~ \qquad ~ \forall v \in T' , 
\] where 
\[ X_m := \{ ( g_1 , g_2 ) \in GL_n \times GL_n ~| ~ \rm{Trace} (g_1^m
) = \rm{Trace} (g_2^m ) ~ \}.
\] Now \( X_m \) is a Zariski closed subvariety of \( GL_n \times GL_n
\) invariant under conjugation. 

Since \( T' \) is of positive upper density, by Theorem
\ref{Algebraic-Chebotarev} above, there exists a connected component
\( G^\phi \) of \( G \) such that \( G^\phi \) is contained in \( X_m
\). 

Consider the map \( x \mapsto x^m \) from \( G \) to itself. Under
this map, \( G^\phi \) maps {\em onto} a connected component \( G^\psi
\) of \( G \). Since \( G^\phi \) is contained inside \( X_m \), \(
G^\psi \) is contained inside \( X_1 \). 

We now argue as in the proof of Theorem 2 of \cite{Ra1} (Theorem
\ref{positive-density} as above) to conclude that there exists a
finite extension \( L \) of \( K \) such that 
\[ \rho_1 |_{G_L} \simeq \rho_2 |_{G_L} .
\]
\end{proof}
\begin{remark} If we assume that $ud(T)=1$, then a simpler proof can
be given: we first appeal to Lemma \ref{mainlemma} and Theorem
\ref{Algebraic-Chebotarev} to conclude that $G\subset X_m$. Since the
map $x\mapsto x^m$ is surjective from $G^0$ to itself, we get that
$G^0\subset X_1$. The theorem then follows from the Chebotarev density
theorem. 
\end{remark}

\subsection{Proof of Corollary \ref{cor-mt}}
For the proof of Corollary \ref{cor-mt}, we recall  the following
proposition proved in (\cite{Ra1, Ra2}): 

\begin{proposition} \label{Schur} Suppose \( \rho_1 \) and \( \rho_2
\) are two Galois representations of \( G_K \) into \( GL_n ( F ) \)
satisfying the following: 
\begin{itemize}
\item There exists a finite extension \( L \) of \( K \) such that 
\[ \rho_1 |_{G_L} \simeq \rho_2 |_{G_L} .
\]

\item The representation $\rho_1|_{G_L}$ is absolutely irreducible
(this is guaranteed if we know that the algebraic monodromy group $G$
of $\rho$ is connected and the representation $\rho$ of $G$ is
absolutely irreducible). 
\end{itemize} Then there exists a finite order character $\chi:G_K\to
F^*$ such that 
\[  \rho_2\simeq \rho_1\otimes \chi.
\]
\end{proposition}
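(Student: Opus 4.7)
The plan is a standard Schur's lemma argument. I would first reduce to the case where $L/K$ is Galois by replacing $L$ with its Galois closure over $K$. The isomorphism $\rho_1|_{G_L}\simeq \rho_2|_{G_L}$ clearly survives restriction to a subgroup, and the absolute irreducibility of $\rho_1|_{G_L}$ is preserved in the setting of the parenthetical clause in the second hypothesis: if the algebraic monodromy group $G$ of $\rho$ is connected, then it is unchanged on passing to an open subgroup of $G_K$ of finite index, so $\rho_1$ restricted to the Galois closure remains absolutely irreducible. I may therefore assume $G_L$ is normal in $G_K$.

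Pick $A\in GL_n(F)$ realizing the isomorphism $\rho_1|_{G_L}\simeq \rho_2|_{G_L}$, and set $\rho_3(\sigma):=A^{-1}\rho_2(\sigma)A$; then $\rho_3\simeq \rho_2$ globally and $\rho_3(g)=\rho_1(g)$ for every $g\in G_L$. For any $\sigma\in G_K$ and $g\in G_L$, normality of $G_L$ gives $\sigma g\sigma^{-1}\in G_L$, hence
\[
\rho_3(\sigma)\rho_1(g)\rho_3(\sigma)^{-1}=\rho_3(\sigma g\sigma^{-1})=\rho_1(\sigma g\sigma^{-1})=\rho_1(\sigma)\rho_1(g)\rho_1(\sigma)^{-1}.
\]
Consequently $\rho_1(\sigma)^{-1}\rho_3(\sigma)$ centralizes the entire image $\rho_1(G_L)$; by absolute irreducibility of $\rho_1|_{G_L}$ and Schur's lemma, this element must be a scalar matrix $\chi(\sigma)I$ with $\chi(\sigma)\in F^*$.

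A routine check using multiplicativity of $\rho_1$ and $\rho_3$ shows that $\chi\colon G_K\to F^*$ is a character. Since $\chi|_{G_L}=1$, it factors through the finite group $\operatorname{Gal}(L/K)$, so $\chi$ has finite order. Finally, the identity $\rho_3(\sigma)=\chi(\sigma)\rho_1(\sigma)$ translates back to $\rho_2\simeq \rho_1\otimes\chi$, completing the proof. The only delicate step is the initial reduction to $L/K$ Galois, since further restriction can in principle destroy absolute irreducibility; this difficulty does not arise in the connected-monodromy setting (the parenthetical case) in which the proposition is applied in this paper.
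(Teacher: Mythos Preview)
Your argument is correct and is precisely the ``standard Schur's lemma argument'' the paper alludes to; the paper itself does not spell out the details, merely stating that the proposition follows from Schur's lemma by standard arguments. Your explicit identification of the reduction to $L/K$ Galois as the one non-automatic step, and your observation that it is unproblematic in the connected-monodromy setting actually used in the paper, is a useful clarification.
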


The proposition follows from Schur's lemma by standard
arguments. Combining the above proposition with Theorem
\ref{maintheorem}, yields a proof of Corollary \ref{cor-mt}.  

\subsection{Example: Tensor and Symmetric powers} 
We now give examples where an algebraic relation between the
representations forces potential equivalence. For a linear representation
$\rho$ of a group $\Gamma$, denote by $\chi_{\rho}$ it's
character. For a positive integer $k$, let $T^k(\rho)$
(resp. $S^k(\rho)$) denote the $k$-th tensor (resp. symmetric) power
representation associated to $\rho$.  The following theorem is proved
in \cite{Ra2}:

\begin{theorem}\label{tensor} Let  $\Gamma$ be an abstract group and
\( \rho_i:\Gamma \to GL_n(F) \) for \( i=1,2 \) be two semisimple
representations of $\Gamma$, where $F$ is a field of characteristic
zero. Suppose that  for some natural number $k$, either the $k$-th
tensor power or the symmetric power representations of $\rho_1$ and
$\rho_2$ become isomorphic. 

Then $\rho_1$ and $\rho_2$ are potentially isomorphic. 
\end{theorem}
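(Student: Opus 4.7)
The plan is to reduce the question to algebraic representations of a connected reductive group and exploit the structure of the representation ring. Since isomorphism of semisimple representations is detected by characters and is unaffected by scalar extension, I may first pass to an algebraic closure $\bar F$ of $F$. Let $\rho := \rho_1 \times \rho_2 : \Gamma \to GL_n(\bar F) \times GL_n(\bar F)$, and let $H$ be the Zariski closure of its image, an algebraic group over $\bar F$; the two projections give algebraic representations of $H$, which I continue to call $\rho_1, \rho_2$. Since $\rho_1 \oplus \rho_2$ is semisimple and $H$ acts faithfully on it, the identity component $H^0$ is connected reductive. The given $\Gamma$-linear isomorphism $T^k(\rho_1) \simeq T^k(\rho_2)$ (respectively $S^k(\rho_1) \simeq S^k(\rho_2)$) is defined by polynomial conditions on $H$, hence extends from $\Gamma$ to $H$ by Zariski density. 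As $\Gamma \cap H^0(\bar F)$ has finite index in $\Gamma$, it suffices to show that $\rho_1|_{H^0} \cong \rho_2|_{H^0}$ as algebraic representations.

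The key algebraic input is that for a connected reductive group $H^0$ in characteristic zero, the representation ring $R(H^0)$ embeds by restriction to a maximal torus $T$ as a subring of $\Z[X^*(T)]^W \subset \Z[X^*(T)]$, where $\Z[X^*(T)]$ is the Laurent polynomial ring $\Z[x_1^{\pm 1}, \ldots, x_r^{\pm 1}]$. This is a UFD whose fraction field $\Q(x_1, \ldots, x_r)$ contains only $\pm 1$ as roots of unity, so $R(H^0)$ is an integral domain, and a representation class is determined by the multiset of its $T$-weights.

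In the tensor power case, $[\rho_1]^k = [\rho_2]^k$ in $R(H^0)$; extracting $k$-th roots in the fraction field forces $[\rho_1] = \pm [\rho_2]$, and since both classes evaluate to the positive integer $n$ at the identity, the minus sign is excluded. Hence $\rho_1 \cong \rho_2$ as $H^0$-representations. In the symmetric power case, restricting to $T$ and writing $\{\lambda_1, \ldots, \lambda_n\}$ and $\{\mu_1, \ldots, \mu_n\}$ for the weight multisets of $\rho_1|_T$ and $\rho_2|_T$, the hypothesis becomes $h_k(e^{\lambda_1}, \ldots, e^{\lambda_n}) = h_k(e^{\mu_1}, \ldots, e^{\mu_n})$ in $\Z[X^*(T)]$. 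Fix a total ordering on $X^*(T)$ compatible with addition. The leading monomial of $h_k$ is $e^{k \lambda^*}$, where $\lambda^* = \max \{\lambda_i\}$, with coefficient $\binom{m+k-1}{k}$ and $m$ the multiplicity of $\lambda^*$; equating with the corresponding data on the $\mu$ side recovers both $\lambda^*$ and $m$. Using the decomposition $h_k(A' \sqcup \{\lambda^*\}^m) = \sum_{j=0}^k h_{k-j}(A') \binom{j+m-1}{j} e^{j \lambda^*}$ and iterating the leading-term analysis yields the multisets of weights inductively, so $\rho_1|_T \cong \rho_2|_T$, and by the key input $\rho_1|_{H^0} \cong \rho_2|_{H^0}$.

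The main obstacle I anticipate is making the combinatorial induction in the symmetric case precise: monomials arising from different $j$-levels in the decomposition above can \emph{a priori} coincide in $\Z[X^*(T)]$, so one cannot simply compare coefficients of each power of $e^{\lambda^*}$ in isolation. I would run the induction in terms of the lex-maximum monomial of the difference being peeled off at each step, applying the top-weight analysis repeatedly to strip off one multiplicity class at a time.
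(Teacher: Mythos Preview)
Your proposal is correct and follows the same overall strategy as the paper: pass to the Zariski closure, restrict to the identity component to get a connected reductive group, and then analyze characters/weights. The symmetric-power argument is essentially identical to the paper's (a total order on the character lattice of a maximal torus and a leading-term induction), and you are in fact more careful than the paper about multiplicities.

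The one genuine difference is in the tensor-power step. The paper base-changes to $\C$ and argues pointwise: the two characters differ at each group element by a $k$-th root of unity, they agree at the identity, so by continuity they agree on a neighbourhood, which is Zariski dense in the connected group. You instead work inside $R(H^0)\hookrightarrow \Z[X^*(T)]$ and use the purely algebraic fact that the fraction field $\Q(x_1,\dots,x_r)$ contains only $\pm 1$ as roots of unity, so $[\rho_1]^k=[\rho_2]^k$ forces $[\rho_1]=\pm[\rho_2]$, with the sign fixed by augmentation. Both arguments are short; yours has the advantage of staying over an arbitrary characteristic-zero field and never invoking $\C$ or compactness, while the paper's has the virtue of making the role of connectedness visually transparent.
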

\begin{proof} 
For the sake of completeness and also serves  to illustrate the power
of the algebraic method, we give an outline of the proof. 
   We  replace $\Gamma$ by the algebraic
envelope $G$ in $GL_n\times GL_n$ of the image of $\Gamma$ inside
$(GL_n\times GL_n) (F)$. The algebraic envelope is a reductive group,
and by going to a subgroup of finite index one can assume that $G$ is
connected. We can further take  $F$ to be the field of complex
numbers. 

For the tensor power the hypothesis implies that the $k$-th powers of
the characters are equal on $G$.  Hence the characters of the
representations differ by a $k$-th root of unity. Since the characters
are equal at identity, they are equal in a connected neighbourhood of
identity. Since a neighbourhood of identity is Zariski dense in a
connected algebraic group, and the characters are regular functions on
the group, it follows that the characters are equal on $G$. 

For the symmetric powers, the proof proceeds by imposing a
lexicographic total ordering on the set of  weights of a compact
maximal torus $D$ contained in  $G$. This is possible since the
weights are totally imaginary on the Lie algebra $\mbox{Lie}(D)$ of
$D$, and we can use the ordering on the reals.  If 
\[\lambda_1\geq\cdots \geq \lambda_n \quad (\mbox{resp.} ~\mu_1\geq
\cdots \mu_n\] are the weights of the  representation  $\rho_1$
(resp. $\rho_2$) of  $\mbox{Lie}(D)$  in $\mbox{End}(\C^n)$, the
leading $n$ weights of  the $k$-th symmetric power are given by 
\[ k\lambda_1\geq (k-1)\lambda_1+\lambda_2\cdots \geq
(k-1)\lambda_1+\lambda_n \geq \cdots\]
\[ \mbox{resp.} \quad  k\mu_1\geq (k-1)\mu_1+\mu_2\cdots \geq
(k-1)\mu_1+\mu_n \geq \cdots\] Arguing inductively allows us to say
that the set of weights occurring in the two representations of $G$
are equal, and hence $\rho_1$ and $\rho_2$ are potentially
isomorphic. 
\end{proof}
Combining the above theorem with Theorem \ref{maintheorem}, we have
the following corollary: 

\begin{corollary} Suppose $\rho_i:G_K \to GL_n(F), ~i=1,2$ are
continuous semisimple  $\ell$-adic representations of the absolute
Galois group $G_K$ of a global field $K$. Suppose there exists a set
$T$ of finite places of $K$ with $ud(T)> \mbox{min} ( 1 - 1/c_1 , 1 -
1/c_2)$ such that 
\[ R_v \circ \rho_{1,v} \simeq R_v \circ \rho_{2,v},
\] where $R_v= T^{n_v}$ or $S^{n_v}$ is either a $n_v$-th tensor or
symmetric power representation for some natural number $n_v$, \( n_v
\) depending on \( v \). Then there is a finite extension $L$ of $K$
such that 
\[  \rho_1 \mid_{G_L} \simeq \rho_2 \mid_{G_L} .
\]
\end{corollary}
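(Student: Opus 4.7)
The plan is to combine Theorem~\ref{tensor} applied locally at each place $v \in T$ with Theorem~\ref{maintheorem} applied globally. The hypothesis $R_v\circ\rho_{1,v}\simeq R_v\circ\rho_{2,v}$ is a statement about tensor or symmetric power representations of the local Galois group $G_{K_v}$, so the first step is to convert it into a local potential equivalence between $\rho_{1,v}$ and $\rho_{2,v}$.

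Concretely, for each $v\in T$, I would take $\Gamma = G_{K_v}$ in Theorem~\ref{tensor} and apply it to the semisimple representations $\rho_{1,v}$ and $\rho_{2,v}$. Since either their $n_v$-th tensor powers or their $n_v$-th symmetric powers are isomorphic, Theorem~\ref{tensor} yields a finite index subgroup of $G_{K_v}$ on which the two restrictions agree. By the definition of local potential equivalence, this shows that $\rho_1$ and $\rho_2$ are locally potentially equivalent at every place $v\in T$.

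With local potential equivalence established on the set $T$, and since by assumption $ud(T) > \min(1 - 1/c_1, 1 - 1/c_2)$, the hypotheses of Theorem~\ref{maintheorem} are met verbatim. Applying it furnishes a finite extension $L/K$ such that $\rho_1|_{G_L}\simeq \rho_2|_{G_L}$, which is the desired conclusion. There is no substantive obstacle here: the content of the corollary is essentially the observation that Theorem~\ref{tensor} upgrades the given algebraic relation between local representations into the local potential equivalence needed to feed Theorem~\ref{maintheorem}, and the upper density bound is chosen precisely so that the latter theorem applies.
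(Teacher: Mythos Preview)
Your proof is correct and follows exactly the approach the paper intends: the paper simply states that the corollary follows by ``combining the above theorem with Theorem~\ref{maintheorem}'', which is precisely your two-step argument of applying Theorem~\ref{tensor} locally and then feeding the resulting local potential equivalence into Theorem~\ref{maintheorem}. One minor point worth making explicit: Theorem~\ref{tensor} requires the representations to be semisimple, and the localizations $\rho_{i,v}$ of a global semisimple representation need not be semisimple in general; however, by Corollary~\ref{semisimple} you may discard a density-zero set from $T$ so that at each remaining $v$ the Frobenius image is semisimple (hence $\rho_{i,v}$ is semisimple on the unramified quotient), and the upper-density hypothesis is unaffected.
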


\section{Powers of character values of Frobenius classes} 
In Theorem \ref{maintheorem}, we worked with equality of conjugacy 
classes. We now formulate a result that will work with character values.

\begin{theorem}\label{characterpowers} With notation as in Theorem
\ref{maintheorem},  assume that there exists a set $T$ of finite
places of $K$  such that for every $v\in T$, there exists non-zero
integers  $m_v>0$ satisfying the following: 
\[ \chi_{\rho_1}(\sigma_v)^{m_v} =\chi_{\rho_2}(\sigma_v)^{m_v}
\]  Assume that the  upper density of \( T \) satisfies the
inequality, 
\[ ud (T) > min \left( 1 - \frac{1}{c_1} , 1 - \frac{1}{c_2} \right), 
\]  Then, $\rho_1$ and $\rho_2$ are potentially equivalent, viz. there
exists a finite extension $L$ of $K$ such that  
\[  \rho_1 \mid_{G_L} \simeq \rho_2 \mid_{G_L} .
\]
\end{theorem}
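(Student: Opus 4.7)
The plan is to uniformise the varying exponents $m_v$ to a single integer $M$, reinterpret the resulting equation as an equality of characters of tensor-power representations, and combine Theorem~\ref{positive-density} with Theorem~\ref{tensor}.

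First, I uniformise the exponents. Because $F$ is a finite extension of $\Q_\ell$, the set of roots of unity in $F$ is finite; let $M$ be a common multiple of their orders (for instance the factorial of the number of such roots). For $v\in T$, either $\chi_{\rho_1}(\sigma_v)=0$, in which case the hypothesis forces $\chi_{\rho_2}(\sigma_v)=0$ too and the identity is trivial, or both values are non-zero and their quotient is an $m_v$-th root of unity in $F$, hence killed by $M$. In either case
\[
\chi_{\rho_1}(\sigma_v)^M \;=\; \chi_{\rho_2}(\sigma_v)^M \qquad (v\in T),
\]
with $M$ depending only on $F$ (and $n$).

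Next, I carry out the base-change reduction used in the proof of Theorem~\ref{maintheorem}. Assume without loss of generality that $c_1\leq c_2$, and pass to the finite extension $L_1/K$ of degree $c_1$ cut out by $G_K\to G_1/G_1^0$, so that the algebraic monodromy of $\rho_1|_{G_{L_1}}$ is the connected group $G_1^0$. By Lemma~\ref{densityinextensions}, the set $T'$ of places of $L_1$ lying over $T$ has positive upper density, and the uniform identity above continues to hold on $T'$.

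Using the identification $\chi_{\rho_i}^M=\chi_{\rho_i^{\otimes M}}$, the hypothesis rewrites as $\chi_{\rho_1^{\otimes M}}(\sigma_w)=\chi_{\rho_2^{\otimes M}}(\sigma_w)$ for all $w\in T'$. The algebraic monodromy of $\rho_1^{\otimes M}|_{G_{L_1}}$, being the image of the connected group $G_1^0$ under a regular morphism, is connected. Theorem~\ref{positive-density} (in the strengthened form of the remark following it, which allows positive upper density once the monodromy group is connected) then yields a finite extension $L/L_1$ with $\rho_1^{\otimes M}|_{G_L}\simeq\rho_2^{\otimes M}|_{G_L}$. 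Finally, Theorem~\ref{tensor}, applied to the semisimple representations $\rho_i|_{G_L}$ whose $M$-th tensor powers are now isomorphic, produces a further finite extension $L'/L$ with $\rho_1|_{G_{L'}}\simeq\rho_2|_{G_{L'}}$, establishing the desired potential equivalence. The only genuinely new step is the uniformisation of $M$; the finiteness of roots of unity in the local field $F$ is the main, but small, obstacle, and the rest is an assembly of machinery already available in the preceding sections.
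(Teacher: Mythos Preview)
Your argument is correct and, in outline, follows the paper: uniformise the exponent using finiteness of roots of unity in $F$, reduce to connected $G_1$, and finish with Theorem~\ref{tensor}. The one step you carry out differently is the passage from the uniformised equation $\chi_{\rho_1}^M=\chi_{\rho_2}^M$ to potential equivalence. You observe that $\chi_{\rho_i}^M=\chi_{T^M(\rho_i)}$ and feed the pair $(\rho_1^{\otimes M},\rho_2^{\otimes M})$ into Theorem~\ref{positive-density} as a black box, obtaining potential equivalence of the tensor powers before invoking Theorem~\ref{tensor}. The paper instead works directly with the variety $X^m=\{(g_1,g_2):\mathrm{Trace}(g_1)^m=\mathrm{Trace}(g_2)^m\}$: algebraic Chebotarev produces a component $G^{\phi}\subset X^m$, and a compact-group argument (the only unitary element with $|\mathrm{Trace}|=n$ is a scalar) identifies an element $(1,\zeta I_n)\in G^{\phi}$ with $\zeta^m=1$, whence $G^0\subset X^m$ and Theorem~\ref{tensor} applies to $G^0$. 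Your route is cleaner precisely because that compact-group step is already packaged inside Theorem~\ref{positive-density}; the paper's route is more explicit about the geometry of the component. One small point you should state: Theorem~\ref{positive-density} and Theorem~\ref{tensor} require the inputs to be semisimple, so you are implicitly using that $\rho_i^{\otimes M}$ is semisimple (true in characteristic zero since the algebraic envelope is reductive).
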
 

For a fixed place, the hypothesis is on a single power
of the trace of  the conjugacy class,  and thus does not necessarily
imply local equivalence. However the global arguments as in the proof
of the above theorem helps us in proving this theorem. 

\begin{proof} Arguing as in the proof of the Theorem
\ref{maintheorem}, we can assume that $G_1$ is connected.  If
$\chi_{\rho_1}(\sigma_v)$ vanishes, then the hypohesis holds for any
integer $m_v$. On the other hand, if $\chi_{\rho_1}(\sigma_v)$ is
non-zero, then $\chi_{\rho_1}(\sigma_v)$  and
$\chi_{\rho_2}(\sigma_v)$ differ by a root of unity belonging to
$F$. Since the group of roots of unity in the non-archimedean local
field $F$ is finite,   there is an integer $m$ independent of $v$,
such  that for $v\in T$, 
\[ \chi_{\rho_1}(\sigma_v)^{m} =\chi_{\rho_2}(\sigma_v)^{m}
\]  Let 
\[ X^m := \{ ( g_1 , g_2 ) \in GL_n \times GL_n ~| ~ \rm{Trace}
(g_1)^m  = \rm{Trace} (g_2)^m  ~ \}.
\] \( X^m \) is a Zariski closed subvariety of \( GL_n \times GL_n \)
invariant under conjugation.  By Theorem \ref{Algebraic-Chebotarev},
the density condition on $T$ implies the existence a connected
component $G^{\phi}\subset X^m$.  By working over the complex numbers
and with a maximal compact subgroup $J$ of $G$, we can assume that
there is an element of the form $(1,y)\in J^{\phi}\cap X^m$. Since the
only elements in an unitary group $U(n)$ with the absolute value of
it's trace being precisely $n$ are scalar matrices $\zeta I_n$ with
$|\zeta|=1$, we conclude that  $y$ is of the form $\zeta I_n$ for
$\zeta$ a $m$-th root of unity.

We can write the connected component $G^{\phi}= G^0.(1,\zeta~I_n)$. In
particular, every element $(u_1, u_2)\in G^0$ the identity  component
of $G$, can be written as 
\[(u_1, u_2)=(z_1, \zeta^{-1}z_2), \] where $(z_1, z_2)\in
G^{\phi}\cap X_m$. Since  $\zeta$ is a $m$-th root of unity, we have  
\[
\mbox{Trace}(u_1^m)=\mbox{Trace}(z_1^m)=\mbox{Trace}(z_2^m)
=\mbox{Trace}((\zeta^{-1}z_2)^m)
=\mbox{Trace}(u_2^m).
\] Hence $G^0\subset X^m$. 

We are now in the situation of Theorem \ref{tensor}.   Let $p_i,
~i=1,~2$ be the two projections from $G^0$ to $GL(n)$.  The statement
$G^0\subset X^m$ can be reformulated as saying that 
\[ \chi_{p_1}^m=\chi_{p_2}^m, \] restricted to $G^0$.  Since $G^0$ is
connected, it follows from Theorem \ref{tensor}, that the
representations $p_1$ and $p_2$ are equivalent restricted to $G^0$.
Hence it follows that $\rho_1$ and $\rho_2$ are potentially
eqiuvalent. 

\end{proof}

\begin{remark} If we just assume that $ud(T)=1$, then the proof can be
given as follows: we appeal to Theorem \ref{Algebraic-Chebotarev},
base change to the extension $L$ as in the proof of the theorem and
then invoke  Theorem \ref{tensor} to complete the proof. 
\end{remark} 

Again, combining with Proposition \ref{Schur}, we have
the following corollary: 

\begin{corollary} \label{charschur} Suppose that the algebraic
monodromy group  $G_1$ is connected and the representation $\rho_1$ is
absolutely irreducible. Assume that there exists a set of unramified
places $T$ for $\rho$ of positive density and a collection of positive
integers $m_v$ for $v\in T $, such that 
 \[ \chi_{\rho_1}(\sigma_v)^{m_v} =\chi_{\rho_2}(\sigma_v)^{m_v}
\qquad \forall v\in T.
\]  Then there exists a Dirichlet character $\chi:~G_K\to F^*$ such
that 
\[\rho_2\simeq \rho_1\times \chi.\] 
\end{corollary}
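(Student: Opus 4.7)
The plan is to deduce Corollary \ref{charschur} by combining Theorem \ref{characterpowers} with Proposition \ref{Schur}, essentially as an immediate consequence once the hypotheses are checked. First I would observe that because $G_1$ is assumed to be connected, we have $c_1 = 1$, so the quantity $\min(1 - 1/c_1, 1 - 1/c_2)$ is zero. Hence the density hypothesis needed to invoke Theorem \ref{characterpowers} reduces to $ud(T) > 0$, which is guaranteed by the assumption that $T$ has positive density.

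Next, I would apply Theorem \ref{characterpowers} directly to the pair $(\rho_1, \rho_2)$ with the integers $m_v$ as given. The conclusion produces a finite extension $L/K$ such that
\[
\rho_1 |_{G_L} \simeq \rho_2 |_{G_L}.
\]
So up to this point the corollary follows mechanically; the content is entirely in the two results being combined.

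The one small check, and the step most likely to trip a careful reader, is verifying that $\rho_1 |_{G_L}$ remains absolutely irreducible so that Proposition \ref{Schur} applies. For this I would argue as follows: the Zariski closure of $\rho_1(G_L)$ is a closed subgroup of $G_1$ of finite index, hence contains the identity component $G_1^0$. Since $G_1$ is connected by hypothesis, $G_1^0 = G_1$, and therefore the algebraic monodromy group of $\rho_1 |_{G_L}$ is all of $G_1$. Absolute irreducibility of $\rho_1$ is a statement about the standard representation of $G_1$ on $F^n$, so it is inherited by $\rho_1 |_{G_L}$.

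Finally, with the hypotheses of Proposition \ref{Schur} verified, there exists a finite order character $\chi : G_K \to F^*$ with
\[
\rho_2 \simeq \rho_1 \otimes \chi,
\]
which is the desired conclusion. The only genuine obstacle in this corollary is the one already handled inside Theorem \ref{characterpowers}, namely converting the pointwise power equalities of character values into an honest algebraic containment $G^0 \subset X^1$; here that work is black-boxed and only the connectedness/irreducibility bookkeeping remains.
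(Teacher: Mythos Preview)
Your proposal is correct and matches the paper's approach exactly: the paper simply states that the corollary follows by combining Theorem \ref{characterpowers} with Proposition \ref{Schur}. You have filled in the natural verification that $c_1=1$ forces the density threshold to be zero and that connectedness of $G_1$ ensures $\rho_1|_{G_L}$ stays absolutely irreducible, which is precisely the parenthetical remark in the statement of Proposition \ref{Schur}.
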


\section{An algebraic density result}
 We have the following density
result, analogous to the Chebotarev density theorem that the Frobenius
conjugacy classes are Zariski dense in the algebraic monodromy group: 

\begin{theorem} Let \( \rho : G_K \rightarrow GL_n ( F ) \) be a
continuous semisimple \( \ell \)-adic representation with connected
algebraic  monodromy group \( G \). Let \( T \) be a set of unramified
places for \( \rho \) with  positive upper density.  For \( v \in T
\),  let \( n_v \) be a positive integer. Then, the smallest algebraic
subgroup of \( G \) generated by  \( \rho ( \sigma_v )^{n_v} \), where
\( \sigma_v \) is the Frobenius conjugacy class associated to \( v \),
as \( v \) ranges over  \( T \) is \( G \).
\end{theorem}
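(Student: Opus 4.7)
The plan is to combine the algebraic Chebotarev density theorem (Theorem \ref{Algebraic-Chebotarev}) with a Noetherian argument on power subvarieties, finishing with a short Lie-algebra computation. Let $H \subseteq G$ denote the smallest normal algebraic subgroup of $G$ containing (representatives of) the conjugacy classes $\rho(\sigma_v)^{n_v}$ for $v \in T$; this is the unambiguous reading of ``the smallest algebraic subgroup generated by $\rho(\sigma_v)^{n_v}$'', since $\sigma_v$ is only defined up to conjugacy and $\rho(G_K)$ is Zariski dense in the connected group $G$. The goal is to show $H = G$.

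For each positive integer $m$, consider the closed subvariety
\[
X_m := \{\, g \in G \mid g^m \in H \,\} \subseteq G,
\]
which is $G$-conjugation invariant because $H$ is normal. The identity $g^{mn} = (g^m)^n$ gives $X_m \subseteq X_{mn}$ for every $n \geq 1$, so the ascending chain $X_{1!} \subseteq X_{2!} \subseteq X_{3!} \subseteq \cdots$ stabilizes by Noetherianity of $G$; call the limit $X_\infty$. For every $m \geq 1$, one has $X_m \subseteq X_{m!} \subseteq X_\infty$, and by the definition of $H$, $\rho(\sigma_v) \in X_{n_v} \subseteq X_\infty$ for all $v \in T$. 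Applying Theorem \ref{Algebraic-Chebotarev} to the conjugation-invariant closed subscheme $X_\infty \subseteq G$ and using that $G$ is connected (so $\Phi$ is trivial), the density of $\{v : \rho(\sigma_v) \in X_\infty\}$ is either $0$ or $1$; since it majorizes $ud(T) > 0$, it must be $1$, forcing $X_\infty = G$. Equivalently, there exists an integer $N \geq 1$ such that $g^N \in H$ for every $g \in G$.

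The quotient $G/H$ is then a connected algebraic group over $F$ on which the $N$-th power map is the constant morphism to the identity. The differential at the identity of the $N$-th power map on any algebraic group is multiplication by $N$ on its Lie algebra; since the map is constant, its differential vanishes, so $N \cdot \mathrm{id} = 0$ on $\mathrm{Lie}(G/H)$. As $F$ has characteristic zero, this forces $\mathrm{Lie}(G/H) = 0$, and since $G/H$ is connected we conclude $G/H = 1$, i.e.\ $H = G$.

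The main obstacle is the setup: since the exponents $n_v$ vary with $v$, algebraic Chebotarev cannot be applied directly to any single $X_m$. The Noetherian stabilization of the chain $\{X_{m!}\}$ is precisely what packages the varying-exponent hypothesis into a single conjugation-invariant closed subscheme, and after that the rest of the argument is essentially immediate.
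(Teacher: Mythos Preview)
Your argument has a genuine gap at the Noetherian step. You claim that the ascending chain
\[
X_{1!} \subseteq X_{2!} \subseteq X_{3!} \subseteq \cdots
\]
of closed subvarieties stabilizes ``by Noetherianity of $G$''. But Noetherianity gives the \emph{descending} chain condition on closed subsets (equivalently, the ascending chain condition on open subsets); ascending chains of closed subvarieties need not stabilize. Concretely, take $G=\mathbb{G}_m$ and $H=\{1\}$: then $X_m=\{g:g^m=1\}=\mu_m$, and the chain $X_{1!}\subsetneq X_{2!}\subsetneq\cdots$ is strictly increasing forever. Of course in the theorem one eventually proves $H=G$, so \emph{a posteriori} every $X_m$ equals $G$; but you cannot assume this in the course of the argument, and for a general closed normal $H\subsetneq G$ your stabilization claim is simply false.

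The repair is not topological but arithmetic, and it is exactly the mechanism behind Lemma~\ref{mainlemma}. Embed the connected quotient $G/H$ into some $GL_m$ and look at the images $\bar\rho(\sigma_v)$. For $v\in T$ these are semisimple elements of $GL_m(F)$ of finite order (dividing $n_v$), so their eigenvalues are roots of unity lying in an extension of $F$ of degree at most $m!$; since $F$ is a non-archimedean local field, there is a uniform bound $N$ on the order of such roots of unity. Hence $\rho(\sigma_v)\in X_N$ for all $v\in T$ (after discarding a density-zero set to ensure semisimplicity), and \emph{now} algebraic Chebotarev applies to the single closed set $X_N$. From $X_N=G$ your Lie-algebra computation (differential of the $N$-th power map is $N\cdot\mathrm{id}$) is correct and finishes the proof cleanly. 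With this fix your route is a legitimate alternative to the paper's, which instead packages the same uniformization into Theorem~\ref{maintheorem} by comparing $\rho_1:G_K\to G/G_T\hookrightarrow GL_m$ with the trivial representation.
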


\medskip\par

\begin{proof} Since \( \rho \) is semisimple, \( G \) is a reductive
algebraic subgroup of \( GL_n \) over \( F \). Let \( G_T \) be the
smallest algebraic subgroup of \( G \) generated by \( \rho ( \sigma_v
)^{n_v} \), where \( \sigma_v \) is the Frobenius conjugacy class
associated to \( v \), as \( v \) ranges over \( T \). Then \( G_T \)
is a closed, normal subgroup of \( G \). Choose an embedding of \( G /
G_T \) as an algebraic subgroup of \( GL_m \) over \( F \) for some
positive integer \( m \). Denote by \( \rho_1 \) the map \( \rho : G_K
\rightarrow G \) followed by quotient to the reductive algebraic group
\( G/G_T \subset GL_m \).  Let \( \rho_2 = Id : G_K \rightarrow GL_m
\) be the \emph{identity} map sending every element of \( G_K \) to
the identity element of \( GL_m \). Applying Theorem \ref{maintheorem}
to \( \rho_1 \) and \( \rho_2 \), it follows that \( \rho_1 \) and \(
\rho_2 \) are potentially equivalent. Thus, for some finite extension
\( L \) of \( K \), 
\[ \rho_1 |_{G_L} \simeq Id |_{G_L} .
\] Thus, \( \rho_1 ( G_L ) = \{ 1 \} \). But since \( \rho_1 ( G_K )\)
is Zariski dense in  the connected algebraic group \(G/G_T \), we get
\( G /G_T = \{ 1 \} \).
\end{proof}

\begin{remark} It is tempting to conjecture that we can combine
Theorems \ref{maintheorem} and \ref{characterpowers} into one single
theorem, viz.,  to be able to conclude from an equality of character
values of the form
\[ \chi_{\rho_1}(\sigma_v^{m_v})^{k_v}
=\chi_{\rho_2}(\sigma_v^{m_v})^{k_v} \qquad \forall v\in T.
\]  at a sufficiently large set of places $T$, that $\chi_1$ and
$\chi_2$ are potentially equivalent. However this would require in the
statement of the foregoing corollary, that the collection of conjugacy
classes \( \{\rho ( \sigma_v )^{n_v}\} \) is Zariski dense in $G$. It
is possible that such a statement holds in the context of $l$-adic
representations, but this is not a general statement about Zariski
dense subsets in connected groups.  For example, the set
$\{\sigma_n=1/n\}$ is Zariski dense in the affine line $G_a$, but if
we consider arbitrary multiples $\{n\sigma_n=1\}$ then it is no longer
Zariski dense in $G_a$.  
\end{remark}

\section{Modular forms} We now indicate briefly the proof of
Theorem \ref{thm:mf} stated in the introduction.  For a newform $f$ of
weight $k$, let 
\[\rho_f: G_{\Q}\to GL_2(\Q_l),\] be the $l$-adic representations of
$G_{\Q}$ associated by the work of Shimura, Ihara and Deligne.  It has
been shown by Ribet in \cite{Ri}, that the representation $\rho_f$ is
semisimple. Further if $f$ is a non-CM form of weight at least two,
then  the Zariski closure $G_f$ of the image $\rho(G_{\Q})$ is $GL_2$.
Theorem \ref{thm:mf} follows now from Corollary \ref{charschur}.

\begin{remark}  A similar statement can be made for the class of
Hilbert modular forms too. 
\end{remark}

\noindent {\bf Acknowledgements}. The authors thank Dipendra Prasad
for useful discussions and a comment which simplified vastly an
earlier proof of Lemma \ref{mainlemma} based on a purity
assumption. Some aspects related to this work were carried out when
the second author was visiting Universit\'{e} de Paris 7 in May of
2009. The second author thanks Cefipra for sponsoring the visit.

\end{document}